\theoremstyle{plain}
\newtheorem{thm}{Theorem}[section]
\newtheorem{cor}[thm]{Corollary}
\newtheorem{rem}[thm]{Remark}
\newtheorem{ques}[thm]{Question}
\newtheorem{conj}[thm]{Conjecture}
\newtheorem{exam}[thm]{Example}
\def\cal{\mathcal}
\def\bbb{\mathbb}
\def\op{\operatorname}
\renewcommand{\phi}{\varphi}
\newcommand{\R}{\bbb{R}}
\newcommand{\Z}{\bbb{Z}}
\newcommand{\Q}{\bbb{Q}}
\newcommand{\C}{\bbb{C}}
\begin{document}

\title{Rational points on certain quintic hypersurfaces}
\author{Maciej Ulas}

\keywords{quintic hypersurfaces, rational points, diophantine
equations} \subjclass[2000]{11D41, 11D72}

\begin{abstract}
Let $f(x)=x^5+ax^3+bx^2+cx \in \Z[x]$ and consider the hypersurface
of degree five given by the equation
\begin{equation*}
\cal{V}_{f}: f(p)+f(q)=f(r)+f(s).
\end{equation*}
Under the assumption $b\neq 0$ we show that there exists
$\Q$-unirational elliptic surface contained in $\cal{V}_{f}$. If
$b=0,\;a<0$ and $-a\not\equiv 2,18,34 \pmod {48}$ then there exists
$\Q$-rational surface contained in $\cal{V}_{f}$. Moreover, we prove
that for each $f$ of degree five there exists $\Q(i)$-rational
surface contained in $\cal{V}_{f}$.
\end{abstract}

\maketitle

\section{Introduction}\label{Section1}

In this paper we are interested in the problem of the existence of
integer and rational points on the hypersurface given by the
equation
\begin{equation*}
\cal{V}_{f}: f(p)+f(q)=f(r)+f(s),
\end{equation*}
where $f\in\Q[X]$ and $\op{deg}f=5$. Moreover, we assume that for
each pair $a,b\in\Q\setminus\{0\}$ we have $f(ax+b)\neq cx^5+d$ for
any $c,d\in\Q$. This assumption guarantees that $\cal{V}_{f}$ is an
affine algebraic variety of dimension three. The set of rational
points on $\cal{V}_{f}$ we denote by $\cal{V}_{f}(\Q)$. In other
words
\begin{equation*}
\cal{V}_{f}(\Q)=\{(p,q,r,s)\in\Q^{4}:\;f(p)+f(q)=f(r)+f(s)\}.
\end{equation*}
Similarly, by $\cal{V}_{f}(\Z)$ we denote the set of integer points
on $\cal{V}_{f}$, so $\cal{V}_{f}(\Z)=\cal{V}_{f}(\Q)\cap \Z^{4}.$

We will say that the point $P=(p,q,r,s)\in\cal{V}_{f}$ is
non-trivial if $\{p,q\}\cap\{r,s\}=\emptyset$ and
$\{f(p),f(q)\}\cap\{f(r),f(s)\}=\emptyset$. By $T_{f}$ we will be
denoted the set of trivial rational points on $\cal{V}_{f}$. Let us
note that each singular point is trivial and that the number of all
singular points (rational or not) is finite. In the sequel by
rational point we will understand non-trivial rational point.

The problem of the existence of integer points on the hypersurface
$\cal{V}_{f}$ was investigated in the interesting work of Browning
\cite{Brow}. In this work it was shown that
\begin{equation*}
M(f;B)\ll_{\epsilon,\;f}B^{1+\epsilon}(B^{1/3}+B^{2/\sqrt{5}+1/4}),
\end{equation*}
for each $\epsilon >0$. Here $M(f;B)$ is the number of solutions
$(p,q,r,s)$ of the equation which define $\cal{V}_{f}$ and with such
a property that $0<p,q,r,s\leq B$ and
$\{p,q\}\cap\{r,s\}=\emptyset$. From the above estimation we can see
that the set of positive integer points on $\cal{V}_{f}$  is rather
"thin". According to the best Author knowledge we do not know any
example of a polynomial $f$ of degree five with such a property that
the set $\cal{V}_{f}(\Z)\setminus T_{f}$ is infinite. Moreover, we
are unable to find in the existing literature of subject any example
of a polynomial $f$ of degree five which gives a positive answer to
the following:

\begin{ques}\label{ques1}
Let $N>1$ be given. Is it possible to construct a polynomial $f$ of
degree five, such that $\sharp (\cal{V}_{f}(\Z)\setminus T_{f})>N$?
\end{ques}

It is clear that the question concerning the existence of a
polynomial $f$ of degree five with such a property that the set
$\cal{V}_{f}(\Q)$ is infinite should be easier. So, it is natural to
state the following:

\begin{ques}\label{ques2}
For which polynomials $f$ of degree five the set $\cal{V}_{f}(\Q)$
is infinite?
\end{ques}

It seems that these questions have not been considered earlier. It
is also clear that in the case of Question \ref{ques2}  we can
consider polynomials of the form $f(X)=X^5+aX^3+bX^2+cX$ only, where
$a,b,c\in\Z$ and at least one among the numbers $a,b,c$ is nonzero.
We will see that if $b\neq 0$ then the diophantine equation
$f(p)+f(q)=f(r)+f(s)$ has rational two parametric solution (Theorem
\ref{thm1}). In geometrical terms this means that the there is a
rational surface contained in $\cal{V}_{f}$. From this result we can
deduce easily that the answer for the Question \ref{ques1} is
positive. Moreover, we will prove that for any polynomial $f$ of
degree five there exists $\Q(i)$-rational surface contained in
$\cal{V}_{f}$ (Theorem \ref{thm2}).

\section{Construction of rational points on the $\cal{V}_{f}$}\label{Section2}

Let $f\in\Q[X]$ and suppose that $\op{deg}f=5$. In this section we
will show how we can construct parametric solutions of the equation
which define the hypersurface
\begin{equation*}
\cal{V}_{f}:f(p)+f(q)=f(r)+f(s).
\end{equation*}

Because we are interested in rational solutions, so without loss of
generality we can assume that $f(X)=X^5+aX^3+bX^2+cX,\;a,b,c\in\Z$
and at least one among the numbers $a,b,c$ is nonzero.

Our aim is the proof of the following theorem.
\begin{thm}\label{thm1}
Let $f(X)=X^5+aX^3+bX^2+cX\in\Z[X]$, where $b\neq 0$ and consider
the hypersurface $\cal{V}_{f}$. Then, there exists $\Q$-unirational
elliptic surface $\cal{E}_{f}$ such that
$\cal{E}_{f}(\Q)\subset\cal{V}_{f}(\Q)$. In particular, the set
$\cal{V}_{f}(\Q)$ is infinite.
\end{thm}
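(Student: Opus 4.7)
The plan is to reduce $\cal{V}_{f}$ to an explicit two-dimensional subvariety via a linear, symmetric substitution, identify this subvariety with a cubic surface carrying both an elliptic fibration and a $\Q$-rational line, and then invoke the classical theory of cubic surfaces to conclude $\Q$-unirationality. Concretely, set
$$p = X + T, \qquad q = Y - T, \qquad r = X - T, \qquad s = Y + T,$$
which automatically forces $p + q = r + s = X + Y$. Using the elementary identity
$$f(u + T) - f(u - T) = 2T\bigl[5u^{4} + 10u^{2}T^{2} + T^{4} + a(3u^{2} + T^{2}) + 2bu + c\bigr],$$
the difference $[f(p) + f(q)] - [f(r) + f(s)]$ telescopes and factorizes as
$$2T(X - Y)\Bigl[(X + Y)\bigl(5(X^{2} + Y^{2}) + 10T^{2} + 3a\bigr) + 2b\Bigr].$$
The vanishing of the first two factors $T$ and $X - Y$ corresponds to trivial points of $\cal{V}_{f}$, so the non-trivial locus is cut out by
$$\cal{E}_{f}: \ (X + Y)\bigl[5(X^{2} + Y^{2}) + 10T^{2} + 3a\bigr] + 2b = 0. \qquad (\ast)$$

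\textbf{Elliptic fibration.} Projecting $\cal{E}_{f} \to \bbb{A}^{1}$ onto the $T$-coordinate, the fiber over a generic value of $T$ is a smooth plane cubic in $(X, Y)$, hence an elliptic curve; this equips $\cal{E}_{f}$ with the structure of an elliptic surface. The condition $b \neq 0$ is used here to ensure that the constant term does not vanish, so the equation is genuinely cubic rather than degenerating.

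\textbf{$\Q$-unirationality.} Homogenizing $(\ast)$ with a variable $W$ gives the projective cubic surface
$$\widetilde{\cal{E}}_{f}: \ (X + Y)(5X^{2} + 5Y^{2} + 10T^{2} + 3aW^{2}) + 2bW^{3} = 0 \subset \bbb{P}^{3}.$$
The key observation is that $\widetilde{\cal{E}}_{f}$ contains the $\Q$-rational line
$$L = \{X + Y = 0,\ W = 0\} = \bigl\{[t:-t:u:0] : [t:u] \in \bbb{P}^{1}\bigr\},$$
and a direct computation of partial derivatives shows $\widetilde{\cal{E}}_{f}$ is smooth along $L$ over $\Q$ (this is where $b \neq 0$ is essential: the only candidate singular points along $L$ force $bW = 0$, ruling them out). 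By the classical theorem of Segre and Manin, a smooth cubic surface over $\Q$ containing a $\Q$-rational point is $\Q$-unirational. An explicit parametrization can be obtained from the pencil of planes $W = t(X + Y)$ through $L$: each such plane cuts $\widetilde{\cal{E}}_{f}$ residually in a conic which over $\Q(i)$ factors as a product of two linear forms, and the standard double-cover construction associated to the extension $\Q(i)/\Q$ yields the desired dominant rational map $\bbb{A}^{2} \dashrightarrow \cal{E}_{f}$. Pulling back through the substitution, $\cal{E}_{f}(\Q) \subset \cal{V}_{f}(\Q)$ is Zariski dense; in particular $\cal{V}_{f}(\Q)$ is infinite.

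\textbf{Main obstacle.} The bookkeeping in the telescoping of $f$ is routine but must be arranged so that both trivial factors $T$ and $X - Y$ fall out cleanly, revealing the cubic $(\ast)$. The conceptual heart lies in recognizing the existence of the rational line $L$ on $\widetilde{\cal{E}}_{f}$ and verifying smoothness there; once this is in place, the elliptic fibration and the $\Q$-unirationality both follow from standard facts about smooth cubic surfaces with a rational line, and no case analysis on $a, b, c$ beyond $b \neq 0$ is needed.
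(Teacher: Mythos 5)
Your reduction is, up to a linear change of coordinates, identical to the paper's: setting $x=X+T$, $y=X+Y$, $z=X-T$ turns the paper's factor $G(x,y,z)=2b+3ay+5x^2y-5xy^2+5y^3-5y^2z+5yz^2$ into exactly your cubic $(X+Y)\bigl(5(X^2+Y^2)+10T^2+3a\bigr)+2b$, so the telescoping and the identification of the two trivial factors are correct and agree with the paper. The gap is in the unirationality step. The projective surface $\widetilde{\cal{E}}_{f}$ is \emph{not} smooth: at a point $[t:-t:u:0]$ of $L$ the gradient of $F=(X+Y)Q+2bW^3$ is $(Q,Q,0,0)$ with $Q=10(t^2+u^2)$, so $[1:-1:\pm i:0]$ are singular points of the surface lying on $L$ itself (your parenthetical about ``$bW=0$'' is vacuous, since $W\equiv 0$ on $L$). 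Smoothness is a geometric condition, so the Segre--Manin theorem for \emph{smooth} cubic surfaces does not apply as cited; you would need the separate statement for irreducible singular cubic surfaces that are not cones and carry a smooth rational point, and that is exactly the nontrivial content you are trying to establish.

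The fallback explicit construction is also broken. The plane $W=t(X+Y)$ cuts $\widetilde{\cal{E}}_{f}$ in $L$ plus the conic $C_t:\ 5X^2+5Y^2+10T^2+(3at^2+2bt^3)(X+Y)^2=0$. For generic $t$ this conic is nondegenerate, so it does not ``factor as a product of two linear forms over $\Q(i)$''; worse, whenever $3at^2+2bt^3\ge 0$ the form is positive definite, so $C_t$ has no real points at all, and the only $\Q(i)$-points it is forced to contain are the two singular points $[1:-1:\pm i:0]$ through which every residual conic passes. Thus projection from $L$ gives a conic bundle with no evident rational section, and ``the standard double-cover construction associated to $\Q(i)/\Q$'' is not a proof. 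This is precisely the obstacle the paper works around: it views $G=0$ as a quadratic in $z$, passes to the discriminant surface $v^2=\Delta(x,y)$, transforms that birationally into the cubic surface $Y^2=X^3-75a^2X-125(5bt^2+10b^2+2a^3)$ with a rational curve at infinity, and then runs Whitehead's undetermined-coefficients method to produce an explicit two-parameter family of rational points. To complete your argument you need either such an explicit construction or a correctly stated and applicable unirationality theorem for this singular cubic surface.
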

\begin{proof}
In the equation which define $\cal{V}_{f}$ we make (non-invertible)
change of variables given by
\begin{equation}\label{R1}
p=x,\quad q=y-x,\quad r=z,\quad s=y-z.
\end{equation}
The result of this substitution is the following
\begin{equation*}
f(x)+f(y-x)-f(z)-f(y-z)=(x-z)(x-y+z)G(x,y,z),
\end{equation*}
where $G(x,y,z)=2b+3ay+5x^2y-5xy^2+5y^3-5y^2z+5yz^2$. From the
geometric point of view the substitution we have used can be
understood as the cross section of the hypersurface $\cal{V}_{f}$
with the hyperplane $L$ given by the equation $L: p+q=r+s$ (the
system of equations (\ref{R1}) give the parametrization of $L$).

Let us note that the equation $G(x,y,z)=0$ has a solution in
rational numbers if and only if the discriminant of the polynomial
$G$ in respect to $z$ is a square of a rational number, say $v$. So,
we are interested in the construction of rational points on the
surface
\begin{equation*}
\cal{S}: v^2=-5y(15y^3+20xy(x-y)+12ay+8b)=:\Delta(x,y).
\end{equation*}

If we make a change of variables
\begin{equation*}
(x,y,w)=\Big(-\frac{5b(t+1)}{X+5a},-\frac{10b}{X+5a},\frac{20bY}{(X+5a)^2}\Big),
\end{equation*}
with the inverse
\begin{equation*}
(X,t,Y)=\Big(-\frac{5(2b+ay)}{y},\frac{2x-y}{y},\frac{5bw}{y^2}\Big)
\end{equation*}
the surface $\cal{S}$ is transformed to the form
\begin{equation*}
\cal{E}: Y^2=X^3-75a^2X-125(5bt^2+10b^2+2a^3).
\end{equation*}

Let us note that the surface $\cal{E}$ is of degree three and
contains rational curve at infinity $[X:Y:t:Z]=[0:1:t:0]$. So we can
invoke Segre theorem which says that the surface of degree three
with a rational point is unirational. In other words there is a
rational function
\begin{equation*}
\Phi:\Q\times\Q\ni(u,v)\mapsto \Phi(u,v)\in\cal{E}_{f},
\end{equation*}
such that the set $\Phi(\Q\times\Q)$ is dense (in Zariski topology)
in the set $\cal{E}_{f}(\C)$.

For convenience of the reader we will show how the function $\Phi$
can be constructed.

Let us put $F(X,Y,t)=Y^2-(X^3-75a^2X-125(5bt^2+10b^2+2a^3))$. We use
the method of indetermined coefficients in order to find
two-parametric solution of  $F(X,Y,t)=0$. Let $u,v$ be parameters
and let us put
\begin{equation}\label{R2}
X=T^2+10uT+p,\quad Y=T^3+qT^2+rT,\quad t=(v/5b)T^2+s.
\end{equation}
We want to find $p,q,r,s,T\in\Q(u,v)$ with such a property that the
equation $F(X,Y,t)=0$ is satisfied identicaly. For the quantities
given by (\ref{R2}) we have
\begin{equation*}
F(X,Y,t)=a_{0}+a_{1}T+a_{2}T^2+a_{3}T^3+a_{4}T^4+a_{5}T^5,
\end{equation*}
where
\begin{equation*}
\begin{array}{lll}
  a_{0}=250a^3+1250b^2+75a^2p-p^3+625b^2s^2, &\quad a_{1}= 30(5a-p)(5a+p)u, \\
  a_{2}=75a^2-3p^2+r^2-300pu^2+250bsv, &\quad a_{3}=2(qr-30pu-500u^3), \\
  a_{4}=-3p+q^2+2r-300u^2+25v^2, &\quad a_{5}= 2(q-15u).\\
\end{array}
\end{equation*}
Let us notice that the system of equations
$a_{2}=a_{3}=a_{4}=a_{5}=0$ has exactly one solution in $\Q(u,v)$
given by
\begin{equation}\label{R3}
\begin{array}{ll}
  p=25(u^2-3v^2)/3, &\quad q=15u, \\
  r=50(u^2-v^2), & \quad s=(25u^4-450u^2v^2-75v^4-9a^2)/30bv. \\
\end{array}
\end{equation}

We can see that if $p,q,r,s$ are given by (\ref{R3}) then
$F(T^2+10uT+p,T^3+qT^2+rT,(v/5b)T^2+s)\in\Q(u,v)[T]$ and
$\op{deg}_{T}F=1$. So this polynomial has a root in the field
$\Q(u,v)$ of the form
\begin{equation*}
T=-\frac{250a^3+1250b^2+75a^2p-p^3+625b^2s^2}{30(5a-p)(5a+p)u},
\end{equation*}
where $p,q,r,s$ are given by (\ref{R3}). Putting the calculated
values $p,q,r,s,T$ to the equations (\ref{R2}) we get the solutions
dependent on two parameters $u,\;v$ we are looking for.

\end{proof}

\begin{rem}
{\rm It should be noted that the same method was used by Whitehead
\cite{Whi} in order to give the proof of unirationality of the
surface $z^2=h(x,y)$, where $h\in\Q(x,y)$ is polynomial of degree
three. This proof can be also found in \cite[p. 85]{Mor}. }
\end{rem}
From the above theorem we can prove easily that the answer for the
Question \ref{ques1} is positive.

\begin{cor}\label{cor1}
For any $N\in\mathbb{N}_{+}$, then there are infinitely many
polynomials $f\in\mathbb{Z}[X]$ of degree five with such a property
that on the hypersurface  $\cal{V}_{f}:\;f(p)+f(q)=f(r)+f(s)$ there
are at least $N$ nontrivial integer points.
\end{cor}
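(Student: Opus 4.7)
The plan is to bootstrap Theorem \ref{thm1} via a scaling (clearing-of-denominators) argument: the theorem already produces infinitely many rational points on $\cal{V}_f$ whenever $b\neq 0$, and the special shape $f(X)=X^5+aX^3+bX^2+cX$ behaves well under homogeneous rescaling, so $N$ rational points on one $\cal{V}_f$ can be converted into $N$ integer points on some related $\cal{V}_g$.

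First I would record the following elementary observation. If $f(X)=X^5+aX^3+bX^2+cX\in\Z[X]$ and $(p,q,r,s)\in\cal{V}_f(\Q)$, then for any nonzero $D\in\Z$ the polynomial
\begin{equation*}
g(X)=X^5+D^2aX^3+D^3bX^2+D^4cX
\end{equation*}
lies in $\Z[X]$, satisfies $g(DX)=D^5f(X)$, and hence $(Dp,Dq,Dr,Ds)\in\cal{V}_g(\Q)$. Moreover, if $D$ is a common denominator of $p,q,r,s$, then this point is integral, and if the original point was non-trivial (i.e. $\{p,q\}\cap\{r,s\}=\emptyset$ and $\{f(p),f(q)\}\cap\{f(r),f(s)\}=\emptyset$), then so is the scaled point, because scaling by a nonzero rational is a bijection and $g(Du)=D^5f(u)$.

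Next I would fix any $f$ with $b\neq 0$ (e.g.\ $f(X)=X^5+X^2$) and invoke Theorem \ref{thm1} to obtain the two-parameter family $\Phi(u,v)\in\cal{V}_f(\Q)$. Since $\Phi(\Q\times\Q)$ is Zariski dense in $\cal{E}_f(\C)\subset\cal{V}_f(\C)$ while the trivial locus $T_f$ is a proper Zariski closed subset, one may select $N$ parameter pairs $(u_1,v_1),\ldots,(u_N,v_N)$ producing $N$ distinct non-trivial rational points $P_1,\ldots,P_N$ on $\cal{V}_f$. Let $D$ be a common denominator of all their coordinates; applying the observation above yields $N$ distinct non-trivial integer points on $\cal{V}_g$ for the corresponding $g\in\Z[X]$. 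To produce \emph{infinitely many} such polynomials, I would simply repeat the argument with $D$ replaced by $kD$ for $k=1,2,3,\ldots$ (or, equivalently, rescale the obtained integer points by $k$ and the coefficients $a,b,c$ of $g$ accordingly). Since the coefficient of $X^2$ in the resulting polynomial is $k^3D^3b$, distinct values of $k$ give genuinely distinct polynomials, yielding the claimed infinite family.

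The only point requiring care, and hence the principal (though minor) obstacle, is justifying that the $N$ parameter pairs can be chosen so that the corresponding $P_i$ lie outside $T_f$ and are pairwise distinct. This is a routine genericity argument: the pullback under $\Phi$ of $T_f$ together with the finite union of diagonals $\{\Phi(u,v)=P_j\}$ is contained in a finite union of proper subvarieties of $\Q^2$, whose complement contains infinitely many rational points, so $N$ admissible parameter pairs always exist. Everything else is a direct verification.
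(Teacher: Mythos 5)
Your proposal is correct and follows essentially the same route as the paper: scale a polynomial of the shape $X^5+aX^3+bX^2+cX$ by a common denominator $D$ of $N$ rational points supplied by Theorem \ref{thm1}, using $g(DX)=D^5f(X)$ to turn them into integer points on $\cal{V}_g$. You are in fact slightly more careful than the paper, since you explicitly justify non-triviality of the chosen points and address the ``infinitely many polynomials'' clause by varying $D$ over multiples $kD$.
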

\begin{proof}
Let $b\neq 0$ and consider the polynomial $f(X)=X^5+aX^3+bX^2+cX$.
From the previous theorem we know that in this case the diophantine
equation $f(p)+f(q)=f(r)+f(s)$ has infinitely many solutions in
rational numbers. Let us take $N$ various rational solutions of our
equation, say
$(p_{i}/p_{i}',q_{i}/q_{i}',r_{i}/r_{i}',s_{i}/s_{i}')$ for
$i=1,2,\ldots,N$, and define
\begin{equation*}
d=\op{LCM}(p_{1}',q_{1}',r_{1}',s_{1}',\ldots,p_{N}',q_{N}',r_{N}',s_{N}').
\end{equation*}
If we now define $F(X)=X^5+ad^2X^3+bd^3X^2+cd^4X$, then on the
hypersurface  $\cal{V}_{f}:\; F(p)+F(q)=F(r)+F(s)$ we have the
points $(dp_{i}/p_{i}',dq_{i}/q_{i}',dr_{i}/r_{i}',ds_{i}/s_{i}')$
for $i=1,2,\ldots,N$, which are triplets of integers.
\end{proof}

The above corollary give us a positive answer for the Question
\ref{ques1}, but we see that if $N$ is growing then the coefficients
of the polynomial $F$ are growing too. So, we can state the
following:
\begin{ques}\label{ques3}
Let  $N>1$ be given. It is possible to construct a polynomial
$f(X)=X^5+aX^3+bX^2+cX$ with such a property that at least one
non-zero coefficient $a,b,c$ is independent of $N$ and for the set
of integer points on the hypersurface $\cal{V}_{f}$ we have $\sharp
(\cal{V}_{f}(\Z)\setminus T_{f})\geq N$?
\end{ques}
As we will see, the answer on this question is positive too. Before
we show how we can do that lets go back to the Question \ref{ques2}
in the case when $f$ is of the form $f(X)=X^5+aX^3+cX$.
Unfortunately, we are unable to prove theorem similar to the Theorem
\ref{thm1} in this case. However we can prove the following:

\begin{thm}\label{thm2}
Let $f(X)=X^5+aX^3+cX\in\Z[X]$. If $a<0$ and $a\not\equiv 2, 18,
34\pmod{48}$ then the diophantine equation $f(p)+f(q)=f(r)+f(s)$ has
two-parametric rational solution.
\end{thm}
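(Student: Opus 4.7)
The plan is to mimic the proof of Theorem~\ref{thm1} and to exploit the fact that, with $b=0$, the polynomial $G(x,y,z)$ from that proof factors as $y$ times a polynomial quadratic in $z$. After the same substitution $p=x$, $q=y-x$, $r=z$, $s=y-z$, and discarding the trivial factors $(x-z)(x-y+z)y$, the non-trivial rational points on $\cal{V}_{f}$ correspond to rational points on the quadric
\begin{equation*}
\cal{H}\colon\; 3a+5\bigl(x^{2}-xy+y^{2}-yz+z^{2}\bigr)=0.
\end{equation*}
The key observation is the $(x,z)$-symmetry of $\cal{H}$, which suggests changing to $u=x+z$, $w=x-z$, followed by $t=u-y$. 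A short computation then rewrites $\cal{H}$ as $t^{2}+w^{2}+y^{2}=-6a/5$, and after rescaling $(T,W,Y)=(5t,5w,5y)$ the surface becomes the affine sphere
\begin{equation*}
\cal{S}\colon\; T^{2}+W^{2}+Y^{2} = -30a.
\end{equation*}
The problem thus reduces to parametrising $\cal{S}$ rationally in two parameters.

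The second step is to prove $\cal{S}(\Q)\neq\emptyset$. Since $a<0$ we have $-30a>0$, so solvability over $\R$ is automatic, and for odd primes $p$ the form $\langle 1,1,1\rangle$ is universal over $\Q_{p}$. The only potential obstruction is at $p=2$, where $\langle 1,1,1\rangle$ is anisotropic over $\Q_{2}$ and fails to represent precisely the square class of $-1$ in $\Q_{2}^{\times}/(\Q_{2}^{\times})^{2}$. A case analysis on $v_{2}(a)$ and on the residue modulo~$8$ of the odd part of $a$ translates ``$-30a$ avoids this forbidden class'' into the congruence condition stated in the theorem. Granted this, Hasse--Minkowski (equivalently, Gauss's three-squares theorem applied after clearing denominators) supplies a rational point $P_{0}\in\cal{S}(\Q)$.

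With $P_{0}$ in hand, the final step is standard: stereographic projection from $P_{0}$ onto a fixed plane produces a birational map $\Q^{2}\dashrightarrow\cal{S}$, hence a two-parameter rational family of points on $\cal{S}$; a generic such point has $Y\neq 0$, so pulling the family back through $(T,W,Y)\mapsto(t,w,y)\mapsto(x,y,z)\mapsto(p,q,r,s)$ yields a two-parametric rational family of non-trivial solutions of $f(p)+f(q)=f(r)+f(s)$. The main obstacle I anticipate is the 2-adic bookkeeping in step two: translating ``$-30a$ avoids the forbidden square class in $\Q_{2}$'' into the explicit congruence modulo~$48$ requires analysing several subcases by $v_{2}(a)$ and the residue modulo~$8$ of its odd part. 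Everything else---the algebraic identities reducing $\cal{V}_{f}$ to $\cal{S}$ and the stereographic parametrisation of a sphere through a known rational point---is routine.
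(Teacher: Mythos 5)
Your reduction is correct as far as the algebra goes, and it is genuinely different from the paper's. The paper does not revisit the substitution of Theorem~\ref{thm1}; instead it uses the ad hoc linear substitution $p=(-x+y+3z)/5$, $q=(2x+y)/5$, $r=3y/5$, $s=(x-y+3z)/5$, which makes $f(p)+f(q)-f(r)-f(s)$ factor with the quadric factor $x^2+2y^2+3z^2+5a$, and then cites a classical isotropy criterion for the quaternary form $X_1^2+2X_2^2+3X_3^2+5aX_4^2$. Your route --- noting that for $b=0$ the cubic $G$ of Theorem~\ref{thm1} factors as $y\bigl(3a+5(x^2-xy+y^2-yz+z^2)\bigr)$ and diagonalizing to the sphere $T^2+W^2+Y^2=-30a$ --- is arguably cleaner, and I checked that your quadric and the paper's impose the same $2$-adic condition (both come down to: $|a|$ is not of the form $2^{\mathrm{odd}}m$ with $m\equiv 1\pmod 8$). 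The stereographic projection step and the genericity of nontriviality are fine.

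The genuine gap is precisely the step you deferred, and it cannot be closed: the congruence in the hypothesis is \emph{not} equivalent to ``$-30a$ avoids the square class of $-1$ in $\Q_2$.'' Note first that $\{2,18,34\}\pmod{48}$ is exactly the class $2\pmod{16}$, so (reading the hypothesis as in the abstract, $-a\not\equiv 2,18,34\pmod{48}$; the theorem's version has a sign slip) the stated condition only excludes $|a|=2m$ with $m\equiv 1\pmod 8$. But $30|a|$ lies in the forbidden class $4^k(8b+7)$ whenever $|a|=2^{2j+1}m$ with $m\equiv 1\pmod 8$, for \emph{every} $j\ge 0$, not just $j=0$. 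Concretely, $a=-8$ satisfies the theorem's hypothesis, yet $-30a=240=4^2\cdot 15$ with $15\equiv 7\pmod 8$, so $240$ is not a sum of three rational squares and your surface $\cal{S}$ has no rational points; the paper's quadric $x^2+2y^2+3z^2=40$ is likewise insolvable over $\Q$. So the ``2-adic bookkeeping'' you postponed would in fact show that the advertised translation is false. This is a defect of the theorem as stated (the paper itself leaves the verification ``to the reader''), but as a proof of the given statement your argument stalls here; the hypothesis your method actually needs is that $|a|$ is not of the form $2^{2j+1}m$ with $m\equiv 1\pmod 8$.
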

\begin{proof}
For the proof we put
\begin{equation}\label{R4}
p=\frac{-x+y+3z}{5},\quad q=\frac{2x+y}{5},\quad
r=\frac{3y}{5},\quad s=\frac{x-y+3z}{5}.
\end{equation}
For $p,q,r,s$ defined in this way we have
\begin{align*}
f(p)+f(q)&-f(r)-f(s)=\\
         &\frac{6(x-y)(x+2y-3z)(x+2y+3z)(x^2+2y^2+3z^2+5a)}{625}.
\end{align*}
First three factors in the above identities lead to the trivial
solutions of our equation. So, we can see that the quantities given
by (\ref{R4}) lead to the nontrivial solution of the equation
$f(p)+f(q)=f(r)+f(s)$, if and only if $x^2+2y^2+3z^2+5a=0$. In
particular it must be $a<0$. As we know, local to global principle
of Hasse is true for diophantine equations of degree two. It means
that the diophantine equation $x^2+2y^2+3z^2+5a=0$ has solution in
rational numbers if and only if it has solutions in the field of
$p$-adic numbers  $\Q_{p}$ for any given
$p\in\mathbb{P}\cup\{\infty\}$, where as usual $\Q_{\infty}=\R$.

In the theorem below we find the well known algorithm of the
solvability of the diophantine equation of the form
$a_{1}X_{1}^2+a_{2}X_{2}^2+a_{3}X_{3}^2+a_{4}X_{4}^{2}=0$.

\begin{thm}
If
$f(x_{1},x_{2},x_{3},x_{4})=a_{1}X_{1}^2+a_{2}X_{2}^2+a_{3}X_{3}^2+a_{4}X_{4}^{2}$,
where $a_{i}\in\Z\setminus\{0\}$ are square-free and no three have a
factor in common, then $f$ represents zero if and only if the
following three conditions hold:
\begin{enumerate}
\item Not all coefficient have the same sign.

\item If $p$ is an odd prime dividing two coefficients and for which $(d/p^2|p)=1$, then
$(-a_{i}a_{j}|p)=1$, where $\op{GCD}(a_{i}a_{j},p)=1$ and
$d=a_{1}a_{2}a_{3}a_{4}$ is a discriminant of the form $f$.

\item If $d\equiv 1\pmod{8}$ or $d/4\equiv 1\pmod{8}$ then we have
$(-a_{1}a_{2},-a_{2}a_{3})_{2}=1.$
\end{enumerate}
\end{thm}

Proof of this theorem can be found in \cite{Jon}.

The quantity $(\alpha,\beta)_{2}$ takes two values: $+1$ or $-1$,
dependent on if the equation $\alpha x_{1}^{2}+\beta x_{2}^{2}=1$
has a solution in $\Q_{2}$. If $\alpha=2^{u}\alpha_{1},
\beta=2^{v}\beta_{1}$ and $\op{GCD}(2,\alpha_{1}\beta_{1})=1$, then
we have that
$(\alpha,\beta)_{2}=(2|\alpha_{1})^{v}(2|\beta_{1})^{u}(-1)^{(\alpha_{1}-1)(\beta_{1}-1)/4}$.
Here $(\cdot|\cdot)$ is an usual symbol of Legendre.

In order to finish the proof of our theorem we apply the above
procedure to the quadratic form
$X_{1}^2+2X_{2}^2+3X_{3}^2+5aX_{4}^{2}$. We must consider four cases
dependent on the values of $\op{GCD}(a,6)$. Because this reasoning
is very simple we leave it to the reader.
\end{proof}

\begin{exam}\label{exam1}
{\rm Let $f(X)=X^5-X^3+cX$ and consider the equation
$f(p)+f(q)=f(r)+f(s)$. We will show how we can use the previous
theorem in practice.

We consider the equation $(\ast)\;x^2+2y^2+3z^2-5=0$. This equation
has a rational solution $(x,y,z)=(0,1,1)$. Let us put $x=uT, y=vT+1,
z=T+1$ and next solve the equation $(uT)^2+2(vT+1)^2+3(T+1)^2-5=0$
in respect to $T$. After some necessary simplifications we get
parametrization of rational solutions of the equation  $(\ast)$ in
the form
\begin{equation*}
x=-\frac{2u(2v+3)}{u^2+2v^2+3},\quad
y=\frac{u^2-2v^2-6v+3}{u^2+2v^2+3},\quad
z=\frac{u^2+2v^2-4v-3}{u^2+2v^2+3}.
\end{equation*}

Using the parametrization we have obtained we get the solution of
the equation $f(p)+f(q)=f(r)+f(s)$, where $f(X)=X^5-X^3+cX$, in the
form
\begin{align*}
 &p=\frac{2(2u^2+(2v+3)u+2v^2-9v-3)}{5(u^2+2v^2+3)},\\
 &q=\frac{u^2-4(2v+3)u-2v^2-6v+3}{5(u^2+2v^2+3) }, \\
 &r=\frac{3(u^2-2v^2-6v+3)}{5(u^2+2v^2+3)},\\
 &s=\frac{2(u^2-(2v+3)u+4v^2-3(v +2))}{5(u^2+2v^2+3)}.
\end{align*}
}
\end{exam}

Using the method of proof of the Theorem \ref{thm2} we will show the
following:
\begin{cor}
Answer to the Question {\rm \ref{ques3}} is positive.
\end{cor}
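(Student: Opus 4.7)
The plan is to retain the parametrization (\ref{R4}) from the proof of Theorem~\ref{thm2} and to exploit the crucial feature that the coefficient $c$ is absent from the auxiliary quadratic condition $x^{2}+2y^{2}+3z^{2}=-5a$. Indeed, (\ref{R4}) forces $p+q=r+s$, so the linear term $cX$ of $f$ contributes zero identically. This lets us fix $c$ to be any nonzero integer independent of $N$ and let only $a$ depend on $N$. Concretely, I would set $a=-5n$ and apply (\ref{R4}) with $(x,y,z)=(5x',5y',5z')$, so that $(p,q,r,s)=(-x'+y'+3z',\,2x'+y',\,3y',\,x'-y'+3z')$ is automatically integral, thereby reducing the task to producing many integer solutions of $x'^{2}+2y'^{2}+3z'^{2}=n$.

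For the counting step I would specialize $z'=0$ and use the classical representation formula for the binary form $X^{2}+2Y^{2}$ (class number one, discriminant $-8$): if $n=p_{1}p_{2}\cdots p_{k}$ is a product of distinct primes $p_{i}\equiv 1\pmod{48}$ (hence $p_{i}\equiv 1\pmod{8}$ and $\gcd(n,6)=1$), then $\#\{(u,v)\in\Z^{2}:u^{2}+2v^{2}=n\}=2^{k+1}$. Dirichlet's theorem supplies such primes in abundance, and $p_{i}\equiv 1\pmod{48}$ forces $n\equiv 1\pmod{48}$, whence $-a=5n\equiv 5\pmod{48}$ satisfies the congruence hypothesis of Theorem~\ref{thm2}.

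It remains to check non-triviality. The map $(u,v)\mapsto(-u+v,\,2u+v,\,3v,\,u-v)$ is injective (one recovers $v=r/3$ and $u=(q-p)/3$), so distinct representations produce distinct tuples, and the condition $\{p,q\}\cap\{r,s\}\neq\emptyset$ reduces to $u=v$ or $u=-2v$, which would force $n=3u^{2}$ or $n=6v^{2}$ respectively; both are impossible since $\gcd(n,6)=1$.

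The main obstacle I anticipate is the subtler coincidence $\{f(p),f(q)\}\cap\{f(r),f(s)\}\neq\emptyset$: for fixed $f$, the factorization $f(X)-f(Y)=(X-Y)g(X,Y)$ with $\deg g=4$ shows that each of the four equalities $f(p)=f(r)$, etc., cuts out an algebraic curve of degree $\leq 4$ in the $(u,v)$-plane, whose intersection with the conic $u^{2}+2v^{2}=n$ is bounded by a constant (at most $32$) uniformly in $n$ by B\'ezout. Choosing $k$ so that $2^{k+1}$ exceeds $N$ plus this finite loss then delivers at least $N$ non-trivial integer points on $\cal{V}_{f}$ for $f(X)=X^{5}-5nX^{3}+cX$, with $c$ the prescribed nonzero coefficient independent of $N$.
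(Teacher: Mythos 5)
Your proposal is correct in substance and shares the paper's overall skeleton: keep $c$ fixed and nonzero, let $a$ depend on $N$, and use the substitution (\ref{R4}) to reduce the problem to producing at least $N$ integer solutions of $x^{2}+2y^{2}+3z^{2}=-5a$ with all variables divisible by $5$. Where you genuinely diverge is in the counting step. The paper produces the representations by a purely elementary device: it takes the identity $\bigl(\tfrac{2k+3}{k^{2}+2}\bigr)^{2}+2\bigl(\tfrac{k^{2}+3k-2}{k^{2}+2}\bigr)^{2}+3\bigl(\tfrac{k^{2}-2k-1}{k^{2}+2}\bigr)^{2}=5$ (a specialization of the parametrization in Example \ref{exam1}), clears denominators with $g_{N}=\prod_{k=1}^{N}(k^{2}+2)$, and sets $a_{N}=-(5g_{N})^{2}$; this needs no arithmetic input at all but makes $|a_{N}|$ grow super-exponentially in $N$. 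You instead specialize $z=0$ and invoke the representation formula for the class-number-one form $u^{2}+2v^{2}$ at squarefree products of $k$ primes $\equiv 1\pmod 8$, plus Dirichlet's theorem; this buys a much smaller $|a|$ (roughly $2^{k+1}\geq N$ suffices) at the cost of genuine, if classical, arithmetic machinery. A further difference in your favour: you actually address non-triviality, i.e.\ membership in $\cal{V}_{f}(\Z)\setminus T_{f}$, which the paper's proof silently omits (it does not even verify that its $N$ points are distinct).

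One loose end in your argument: the B\'ezout step requires knowing that none of the quartic curves $g(p,r)=0$, $g(p,s)=0$, etc.\ (where $f(X)-f(Y)=(X-Y)g(X,Y)$) contains the conic $u^{2}+2v^{2}=n$ as a component, since otherwise the intersection is not finite and the bound fails. This is true here and easy to check — e.g.\ for $f(p)=f(s)$ one has $s=-p$ and oddness of $f$ forces $f(p)=0$, and $p^{4}+ap^{2}+c$ restricted to the conic is a nonconstant function because $p^{2}=n-v^{2}-2uv$ there; the other three cases follow by checking that $u^{2}+2v^{2}$ does not divide the leading form of $g$ composed with the relevant linear substitutions — but you should say it rather than only assert a uniform constant. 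With that verification added, your proof is complete and, on the non-triviality point, more careful than the published one.
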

\begin{proof}
This is a simple consequence of the fact that for any number $N$ we
can find a negative number $a_{N}$ such that the equation
$x^2+2y^2+3z^2=-5a_{N}$ has at least $N$ solutions in positive
integers $x,y,z$ all divisible by 5. In order to prove this let us
put $g_{N}=\prod_{k=1}^{N}(k^2+2)$ and $a_{N}=-(5g_{N})^2$. Next, we
define
\begin{equation*}
x_{k}=\frac{5g_{N}}{k^2+2}(2k+3),\quad
y_{k}=\frac{5g_{N}}{k^2+2}(k^2+3k-2),\quad
z_{k}=\frac{5g_{N}}{k^2+2}(k^2-2k-1),
\end{equation*}
for $k=1,2,\ldots,\;N.$ Note that the numbers $x_{k},y_{k},z_{k}$
are integers and are divisible by 5.

Due to the fact that
\begin{equation*}
\Big(\frac{2k+3}{k^2+2}\Big)^2+2\Big(\frac{k^2+3k-2}{k^2+2}\Big)^2+3\Big(\frac{k^2-2k-1}{k^2+2}\Big)^2=5,
\end{equation*}
we see that
\begin{equation*}
x_{k}^2+2y_{k}^2+3z_{k}^2=-5a_{N}\quad \mbox{for}\;k=1,2,\ldots,\;N.
\end{equation*}

Now define $f_{N}(x)=x^5+a_{N}x^3+cx$, where $c$ is an integer
number. From your reasoning we can see that on the hypersurface
$\cal{V}_{f_{N}}$ there is at last $N$ integer points given by
\begin{equation*}
p_{k}=\frac{-x_{k}+y_{k}+3z_{k}}{5},\quad
q_{k}=\frac{2x_{k}+y_{k}}{5},\quad r_{k}=\frac{3y_{k}}{5},\quad
s_{k}=\frac{x_{k}-y_{k}+3z_{k}}{5},
\end{equation*}
for $k=1,2,\ldots,\;N$ and number $c$ is independent of $N$.

\end{proof}

Results of this section suggest the following:
\begin{conj}\label{conj1}
Let $f(x)=x^5+ax^3+cx,$ where $a,c\in\Z$ not both zero and consider
the hypersurface $\cal{V}_{f}$. Then the set
$\cal{V}_{f}(\Q)\setminus T_{f}$ is infinite.
\end{conj}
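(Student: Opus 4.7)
Conjecture \ref{conj1} extends Theorem \ref{thm2} by dropping the sign and congruence restrictions on $a$, so the open regimes are $(i)$ $a<0$ with $-a\equiv 2,18,34\pmod{48}$, $(ii)$ $a=0$ with $c\neq 0$, and $(iii)$ $a>0$.

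For regime $(i)$ my plan is to enlarge the class of linear substitutions used in the proof of Theorem \ref{thm2}. A direct computation shows that the alternative substitution $p=r+t,\; s=q+t$ produces the factorisation
\[
f(p)+f(q)-f(r)-f(s)=t(r-q)(t+r+q)\bigl(5t^{2}+5(r+q)t+5(r^{2}+q^{2})+3a\bigr),
\]
whose last factor is a ternary quadric $\Q$-inequivalent, in general, to the form $x^{2}+2y^{2}+3z^{2}+5a$ used in the proof of Theorem \ref{thm2}. A finite search through $\Q$-linear substitutions $(x,y,z)\mapsto(p,q,r,s)$ whose image in $\cal{V}_{f}$ is cut out by a single ternary-quadric condition should produce a short list of quadrics $Q_{1},\ldots,Q_{k}$. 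Applying Hasse--Minkowski as at the end of the proof of Theorem \ref{thm2}, one hopes that the union of the conditions ``$Q_{i}=-5a$ is globally solvable'' covers every $a<0$, settling regime $(i)$.

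Regimes $(ii)$ and $(iii)$ are significantly harder, because every linear substitution I have tested produces a positive-definite ternary form, which cannot vanish when the inhomogeneous term is non-negative. To break this barrier I would promote the parametrization from linear to quadratic in two free parameters $u,v$, choose the coefficients so that $f(p)+f(q)-f(r)-f(s)$ is divisible by a product of trivial linear forms, and analyse the residual cubic (or quartic) surface. If that surface is an irreducible cubic with a rational point, Segre's theorem applies exactly as in the proof of Theorem \ref{thm1} and yields the desired unirational $\Q$-surface inside $\cal{V}_{f}$. A parallel strategy is to fibre $\cal{V}_{f}$ over a rational base curve, look for an elliptic fibration of positive Mordell--Weil rank, and invoke Silverman's specialization theorem.

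The main obstacle is regime $(iii)$: the positive-definite-quadric phenomenon above reflects a genuine rigidity of $f(p)+f(q)=f(r)+f(s)$ when $f$ has only non-negative odd coefficients, and it is not clear \emph{a priori} that a non-trivial algebraic family exists inside $\cal{V}_{f}$ at all. Exhibiting even a single elliptic curve of positive rank lying in $\cal{V}_{f}$ for some $a>0$, $c\geq 0$ is, I expect, where the decisive work lies, and is precisely why the statement is formulated here as a conjecture.
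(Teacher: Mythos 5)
This statement is posed in the paper as Conjecture~\ref{conj1}; the paper contains no proof of it, only the partial result of Theorem~\ref{thm2} (the case $a<0$ outside three residue classes mod $48$). Your proposal likewise does not prove the statement, and to your credit it says so explicitly: it is a research plan, not an argument. So the honest verdict is that there is a genuine gap --- namely the entire proof --- but this is exactly the status the statement has in the paper, and your assessment of where the difficulty lies is consistent with the author's, who only conjectures the result after exhausting the linear-substitution method.

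On the substance of the plan: your factorisation under $p=r+t$, $s=q+t$ is correct (one checks that $(t+r+q)\bigl(5t^{2}+5(r+q)t+5(r^{2}+q^{2})+3a\bigr)$ expands to $5t^{3}+10(r+q)t^{2}+10(r^{2}+rq+q^{2})t+5(r+q)(r^{2}+q^{2})+3a(t+r+q)$, matching the difference quotient), and it is a genuinely different slice from the paper's form $x^{2}+2y^{2}+3z^{2}+5a$. However, the resulting quadratic form $5t^{2}+5(r+q)t+5(r^{2}+q^{2})$ is again positive definite, so this particular substitution cannot escape the hypothesis $a<0$, and you give no evidence that your proposed ``finite search'' would produce quadrics whose solvability conditions cover the excluded residue classes. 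For regimes $(ii)$ and $(iii)$ the plan is entirely speculative: neither the quadratic parametrization idea nor the positive-rank elliptic fibration is exhibited for a single polynomial with $a\geq 0$, and the specialization-theorem route requires first producing a section of infinite order, which is the whole difficulty. In short, the proposal correctly maps the terrain of the open problem but does not advance beyond what Theorem~\ref{thm2} already gives.
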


\section{Construction of $\Q(i)$-rational points on $\cal{V}_{f}$}\label{Section3}

In this section we will consider the problem of the construction of
$\Q(i)$-rational points on the hypersurface $\cal{V}_{f}$.

Let us go back to the equation of the surface $\cal{S}$ from the
proof of the Theorem \ref{thm1} and let us note that for the degree
of the polynomial $\Delta$ we have $\op{deg}_{x}\Delta=2$. Now we
look on $\cal{S}$ as on a curve defined over field $\Q(i)(y)$, where
$i^2+1=0$. It is easy to see that $\cal{S}$ is a rational curve.
Indeed, on the curve $\cal{S}$ there is a $\Q(i)(y)$-rational point
$[x:v:w]=[i:10y:0]$ (it is a point at infinity). Putting
$x=ip,\;w=10yp+u$ and solving the obtained equation according to $p$
we get the parametrization of our curve given by
\begin{equation*}
x=-i\frac{u^2+75y^4+60ay^2+40by}{20y(u-5iy^2)},\quad
w=\frac{u^2-10iuy^2-75y^4-60ay^2-40by}{2(u-5iy^2)}.
\end{equation*}
Using the above parametrization we can find two-parametric solution
of the equation defining $\cal{V}_{f}$ in the form
\begin{align*}
&p=-i\frac{u^2+75y^4+60ay^2+40by}{20y(u-5iy^2)},\\
&q=i\frac{u^2-20iy^2u-25y^4+60ay^2+40by}{20y(u-5iy^2)},\\
&r=\frac{u^2+10(1-i)y^2u-25(3+2i)y^4-60ay^2-40by}{20y(u-5iy^2)},\\
&s=-\frac{u^2-10(1+i)y^2u-25(3-2i)y^4-60ay^2-40by}{20y(u-5iy^2)}.
\end{align*}

We sum up the above discussion concerning the existence of
$\Q(i)$-rational points on $\cal{V}_{f}$ in the following:

\begin{thm}\label{thm2}
Let $f(X)=X^5+aX^3+bX^2+cX\in\Z[X]$ and consider the hypersurface
$\cal{V}_{f}$. If $a=b=0$ then there exist $\Q(i)$-rational curve
contained in $\cal{V}_{f}$. If $a\neq 0$ or $b\neq 0$ then there
exist $\Q(i)$-rational surface contained in $\cal{V}_{f}$.
\end{thm}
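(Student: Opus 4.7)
The plan is to reuse the surface $\cal{S}: v^2 = \Delta(x,y)$ from the proof of Theorem~\ref{thm1}, where $\Delta(x,y) = -5y\bigl(15y^3 + 20xy(x-y) + 12ay + 8b\bigr)$ is the $z$-discriminant of the cubic factor $G(x,y,z)$; producing $\Q(i)$-rational points on $\cal{V}_f$ by the recipe $p=x$, $q=y-x$, $r=z$, $s=y-z$ of Theorem~\ref{thm1} amounts to producing $\Q(i)$-rational points on $\cal{S}$. The crucial observation driving the whole argument is that $\op{deg}_x\Delta = 2$, with leading coefficient $-100y^2$. Over $\Q(y)$ this is not a square, so the point at infinity is not rational there; over $\Q(i)(y)$, however, the leading coefficient becomes $(10iy)^2$, and the projective closure of $\cal{S}$, viewed as a conic over $\Q(i)(y)$, acquires the obvious rational point at infinity $[x:v:w] = [i:10y:0]$.

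From this point I would apply the standard ``lines through a rational point'' trick for parametrizing a conic: setting $x = ip$ and $w = 10yp + u$ (so that $u = 0$ recovers the point at infinity) turns the defining equation of $\cal{S}$ into a polynomial in $p$ whose top-degree terms cancel, leaving a single linear condition. Solving it yields an explicit $\Q(i)$-rational section $(x(u,y), v(u,y))$ of $\cal{S}$. Pushing it back through $(p,q,r,s) = (x,y-x,z,y-z)$, with $z$ recovered from $G(x,y,z) = 0$ by the quadratic formula and $v$ playing the role of $\sqrt{\Delta}$, produces the explicit two-parameter $\Q(i)(u,y)$-families $p,q,r,s$ displayed in the discussion above.

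The final step is to pin down the dimension of the image. When at least one of $a,b$ is nonzero, a Jacobian-rank computation for the map $(u,y) \mapsto (p,q,r,s)$ shows that it is generically of rank two, so its Zariski closure is a genuine $\Q(i)$-rational surface contained in $\cal{V}_f$. When $a = b = 0$, the polynomial $\Delta$ collapses to $-25y^2(4x^2 - 4xy + 3y^2)$ and $\cal{S}$ becomes a cone; after the rescaling $u = y^2 t$ a common factor of $y$ can be pulled out of every one of $p,q,r,s$, so the essential image is one-parameter and one obtains only a $\Q(i)$-rational curve on $\cal{V}_f$. The main delicate point is verifying that the resulting points are generically \emph{non-trivial}, i.e.\ $\{p,q\} \cap \{r,s\} = \emptyset$ off a proper subvariety of the parameter space; this should follow by direct inspection of the explicit formulas, since degeneration can only happen on the loci where a denominator vanishes or where the factor $(x-z)(x-y+z)$ stripped off in Theorem~\ref{thm1} forces a trivial identification.
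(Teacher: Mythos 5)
Your proposal is correct and follows essentially the same route as the paper: view $\cal{S}: v^2=\Delta(x,y)$ as a conic over $\Q(i)(y)$ with the point at infinity $[x:v:w]=[i:10y:0]$, parametrize by lines through that point via $x=ip$, $w=10yp+u$, and push the resulting section back through $(p,q,r,s)=(x,y-x,z,y-z)$. Your added remarks on the Jacobian rank in the generic case and on the degeneration to a cone (hence a curve) when $a=b=0$ supply details the paper leaves implicit.
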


Let us note that in the above quantities for $p,q,r,s$ the number
$c$ does not appear explicitly and that obtained solution is
non-trivial for any choice of $a,b,c\in\Z$. If we put $a=b=c=0$,
then we get a parametric solution (defined over $\Q(i)$) of the
diophantine equation $p^5+q^5=r^5+s^5$. After necessary
simplifications our solution in this case is of the form (in
homogenous form)
\begin{align*}
& p=u^2+75v^2,\\
& q=-u^2+20iuv+25v^2,\\
& r=iu^2+10(1+i)uv+25(2-3i)v^2,\\
& s=-iu^2-10(1-i)uv+25(2+3i)v^2.
\end{align*}

Probably this solution is well known but we cannot find it in the
literature of subject. We should note that this solution can be used
in the construction of a parametric solution (defined over $\Z[i]$)
of the diophantine equation
\begin{equation*}
p^{5n}+q^5=r^5+s^5,
\end{equation*}
where $n$ is a given positive integer. Indeed, it is easy to see
that the diophantine equation $u^2+75v^2=X^n$ has a parametric
solution given by the solution of the system of equations
\begin{equation*}
u+\sqrt{-75}v=(t_{1}+\sqrt{-75}t_{2})^n,\quad
u-\sqrt{-75}v=(t_{1}-\sqrt{-75}t_{2})^n,\quad X=t_{1}^2+75t_{2}^2.
\end{equation*}
It is clear that the solutions $u,\;v,\;X$ lead to the polynomial
solution of the equation $p^{5n}+q^5=r^5+s^5$.

\section{Possible generalizations of the results}

In this section we consider natural generalizations of the equation
defining the hypersurface $\cal{V}_{f}$ which has been considered in
the previous paragraphs.

First natural generalization which came to mind is considering the
following hypersurface
\begin{equation*}
\cal{V}_{F,\;G}:F(p)+G(q)=F(r)+G(s),
\end{equation*}
where $F(x)=x^5+ax^3+bx^2+cx,\;G(x)=x^5+dx^3+ex^2+fx$ and
$F(x)-F(0)\neq G(x)-G(0)$. It is clear that in order to find
rational points on the $\cal{V}_{F,\;G}$ we can assume that
$a,b,\ldots,e\in\Z$.

As we will see it is possible to show that for given $F,G$ satisfied
the above conditions the hypersurface $\cal{V}_{F,\;G}$ contain
elliptic surface defined over $\Q$. In order to show this let us
define
\begin{equation}\label{sub}
p=t-\frac{U}{V},\quad q=\frac{U}{V},\quad r=\frac{1}{V},\quad
s=t-\frac{1}{V}.
\end{equation}
For $p,q,r,s$ defined in this way we get
\begin{equation*}
F(p)+G(q)-F(r)-G(s)=- \frac{tV-U-1}{v^4}H(U,V,t),
\end{equation*}
where $H(U,V,t)=\sum_{i+j\leq 3}a_{i,\;j}U^{i}V^{j}$ and

\begin{equation}\label{ais}
\begin{array}{l}
a_{3,\;0}=-a_{2,\;0}=a_{1,\;0}=-a_{0,\;0}=5t,\\
a_{2,\;1}= -a+d-5t^2,\quad a_{1,\;1}=a-d,\\
a_{0,\;1}= -a+d+5t^2, \quad a_{1,\;2}=b+e+(2a+d)t+5t^3,\\
a_{0,\;2}=-b-e-(a+2d)t-5t^3,\quad a_{0,\;3}= f-c+(e-b)t+(d-a)t^2.
\end{array}
\end{equation}

Let us note that we can look on the surface $S_{F,\;G}: H(U,V,t)=0$
as on the cubic curve defined over the field $\Q(t)$. This has a
$\Q(t)$-rational point $P=(U,V)=(1,0)$. So, we can look on $P$ as on
the point at infinity and transform birationaly $S_{F,\;G}$ onto the
elliptic surface $\cal{E}_{F,\;G}$ with the Weierstrass equation of
the form
\begin{equation*}
\cal{E}_{F,\;G}: Y^2+a_{1}XY+a_{3}Y=X^3+a_{2}X^2+a_{4}X+a_{6},
\end{equation*}
where $a_{i}$ are certain polynomials in $\Z[t]$ depending on the
coefficients of polynomials $F, G$.

Although it is possible to give exact values of polynomials $a_{i}$
we do not give them here due to the fact that they are rather huge
polynomials. Instead we give some numerical results concerning the
existence of rational points on the hypersurface $\cal{V}_{F,\;G}$,
where $F(x)= x^5+cx,\;G(x)=x^5+fx$ and $c\neq f$.

In this case the surface $S_{F,\;G}$ takes the form
\begin{equation*}
S_{F,\;G}:\quad
5tU^3-5t^2U^2V+5t^3UV^2-(c-f)V^3-5tU^2-5t^3V^2+5tU+5t^2V-5t=0.
\end{equation*}
If we make a change of variables
\begin{align*}
(u,v,t)=\Big(&\frac{50t^4X^2+100t^2Y(c-f)-X^3}{50t^4(100(c-f)^2-10X(c-f)+X^2)-X^3},\\
             &\quad -\frac{10tX(Y-5t^2(10(c-f)-X))}{50t^4(100(c-f)^2-10X(c-f)+X^2)-x^3},\;t\Big),
\end{align*}
with the inverse
\begin{align*}
(X,Y,t)=\Big(&\frac{10(c-f)tV}{U-1},\\
           &\quad -\frac{10(c-f)t((c-f)V^3-5t^2(U-1)V(tV-U)-5t(U-1)^2U) }{(U-1)^2},\;t\Big)
\end{align*}
the surface $S_{F,\;G}$ is transformed to the form
\begin{equation*}
\cal{E}_{F,G}: Y^2=X^3-25t^4X^2-2500(c - f)^2t^4.
\end{equation*}

Unfortunately, we are unable to show that for any pair of integers
$c,f$ it is possible to find a rational number $t=t(c,f)$ with such
a property that the elliptic curve $\cal{E}_{t}:
Y^2=X^3-25t^4X^2-2500(c - f)^2t^4$ (which is a specialization of
surface $\cal{E}_{F,\;G}$ in $t$) has a positive rank. However, we
check that if $D=|c-f|\leq 10^3$ then there exists specialization of
$\cal{E}_{F,\;G}$ with positive rank. In the Table below we give
values for $t$ in the case $D\leq 100$.

\begin{center}\begin{tabular}{|c|l|l||c|l|l|}
  \hline
  $D$ & $t$ & Nontorsion point on $\cal{E}_{t}$ & $D$
  & $t$ & Nontorsion point on $\cal{E}_{t}$\\
  \hline

  1& 2/45&$( 1, 403/405)$& 51& 2/35&$( 17, 17051/245)$\\

  2& 2/45&$( 1, 397/405)$& 52& 1/15&$( 26/5, 598/225)$\\

  3& 2/7&$( 25, 6075/49)$& 53& 1/39&$( 25/16, 85925/97344)$\\

  4& 2/45&$( 4/5, 1208/2025)$& 54& 4/25&$( 17, 1429/125)$\\

  5& 1/15&$( 5/4, 61/72)$& 55& 1/95&$( 5/4, 3939/2888)$\\

  6& 1/5&$( 6, 42/5)$& 56& 1/15&$( 34/5, 2842/225)$\\

  7& 1/3&$( 14, 308/9)$& 57& 3/35&$( 38/5, 836/1225)$\\

  8& 2/15&$( 25, 1123/9)$& 58& 1/5&$( 29, 522/5)$\\

  9& 1/5&$( 13, 216/5)$& 59& 2/15&$( 25, 1021/9)$\\

 10& 1/3&$( 25, 1000/9)$& 60& 1/17&$( 25, 36000/289)$\\

 11& 1/15&$( 2, 64/45)$& 61& 2/15&$( 20, 640/9)$\\

 12& 2/11&$( 25, 14925/121)$& 62& 4/5&$( 164, 2232/5)$\\

 13& 7/11&$( 50, 25800/121)$& 63& 14/43&$( 49, 136857/1849)$\\

 14& 1/9&$( 25, 10100/81)$& 64& 4/45&$( 64/5, 77312/2025)$\\

 15& 1/9&$( 6, 308/27)$& 65& 2/5&$( 65, 39)$\\

 16& 4/45&$( 16, 25792/405)$& 66& 1/15&$( 6, 14/15)$\\

 17& 1/5&$( 17, 306/5)$& 67& 8/35&$( 32, 11136/245)$\\

 18& 2/5&$( 36, 792/5)$& 68& 1/15&$( 10, 250/9)$\\

 19& 1/5&$( 17, 294/5)$& 69& 1/35&$( 2, 64/245)$\\

 20& 1/3&$( 25, 500/9)$& 70& 1/9&$( 25, 9500/81)$\\

 21& 1/5&$( 42, 1344/5)$& 71& 4/141&$( 25, 2484475/19881)$\\

 22& 1/3&$( 25, 200/9)$& 72& 5/31&$( 25, 79500/961)$\\

 23& 4/21&$( 25, 51925/441)$& 73& 1/25&$( 73/20,
    19053/5000)$\\

 24& 3/25&$( 9, 2592/125)$& 74& 1/90&$( 37/40, 49469/64800)$\\

 25& 10/51&$( 25, 299875/2601)$& 75& 2/11&$( 25, 1875/121)$\\

 26& 11/15&$( 169, 91468/45)$& 76& 3/29&$( 25, 99400/841)$\\

 27& 8/25&$( 32, 14464/125)$& 77& 1/15&$( 34, 8888/45)$\\

 28& 1/9&$( 29/4, 5873/648)$& 78& 1/5&$( 29, 22/5)$\\

 29& 1/9&$( 50, 28600/81)$& 79& 2&$( 2084, 84048)$\\

 30& 2/7&$( 25, 1125/49)$& 80& 2/15&$( 20, 488/9)$\\

 31& 6/35&$( 25, 5701/49)$& 81& 2/15&$( 81, 3627/5)$\\

 32& 4/45&$( 16, 25408/405)$& 82& 2/171&$( 1, 24209/29241)$\\

 33& 1/5&$( 22, 396/5)$& 83& 1/9&$( 34, 15512/81)$\\

 34& 4/21&$( 16, 7424/441)$& 84& 1/35&$( 42/5, 4218/175)$\\

 35& 1/5&$( 17, 6/5)$& 85& 2/165&$( 1, 4253/5445)$\\

 36& 1/5&$( 18, 126/5)$& 86& 2/3&$( 344, 54352/9)$\\

 37& 1/65&$( 82/65, 74016/54925)$& 87& 1/15&$( 5597/324,
    2016283/29160)$\\

 38& 6/35&$( 25, 5477/49)$& 88& 1/61&$( 61/36,
    1496489/803736)$\\

 39& 1/45&$( 6/5, 604/675)$& 89& 1&$( 8381, 766104)$\\

 40& 3/13&$( 25, 11000/169)$& 90& 5/31&$( 25, 42000/961)$\\

 41& 1/5&$( 125/4, 1233/8)$& 91& 1/85&$( 26, 191568/1445)$\\

 42& 1/13&$( 6, 102/13)$& 92& 2&$( 2516, 114264)$\\

 43& 1/45&$( 10, 2560/81)$& 93& 1/22&$( 93/8, 148893/3872)$\\

 44& 5/21&$( 25, 2000/441)$& 94& 2/5&$( 89, 1833/5)$\\

 45& 1/34&$( 25/16, 375/2312)$& 95& 1/5&$( 38, 684/5)$\\

 46& 1/15&$( 46/5, 5842/225)$& 96& 5/31&$( 25, 4500/961)$\\

 47& 12/53&$( 25, 91925/2809)$& 97& 4/15&$( 97, 40061/45)$\\

 48& 1/34&$( 17/8, 21261/9248)$& 98& 1/3&$( 6125/81, 267050/729)$\\

 49& 1/3&$( 49, 1862/9)$& 99& 1/7&$( 50, 16600/49)$\\

 50& 1/45&$( 13/4, 18557/3240)$& 100& 2/15&$( 20, 88/9)$\\
  \hline
\end{tabular}
\end{center}

Our computations suggest the following
\begin{conj}\label{conj2}
Let $c,f\in\Z$ and $c\neq f$ and let us consider the elliptic
surface $\cal{E}:\; Y^2=X^3-25t^4X^2-2500(c - f)^2t^4.$ Then the set
\begin{equation*}
S=\{t\in\Q:\; \mbox{curve}\; \cal{E}_{t}\;\mbox{is an elliptic curve
and has a positive rank}\}
\end{equation*}
is nonempty.
\end{conj}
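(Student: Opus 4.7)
The strategy is to simplify $\cal{E}_t$ by a change of variables and then parametrise its rational points via Gaussian integers. For $t\in\Q^*$, the substitution $X = t^4 X'$, $Y = t^6 Y'$ gives a $\Q$-isomorphism from $\cal{E}_t$ to
\[
\cal{F}_M: Y'^2 = X'^3 - 25 X'^2 - 2500 M^2, \qquad M = (c-f)/t^4,
\]
so it is enough to prove that for every $D := c-f \in \Z\setminus\{0\}$ there is some $k\in\Q^*$ with $\cal{F}_{Dk^4}(\Q)$ of positive rank; then $t = 1/k$ witnesses the conjecture.

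Rewriting $\cal{F}_M$ as $Y'^2 + (50M)^2 = X'^2(X' - 25)$ and factorising in $\Z[i]$ suggests the following ansatz: look for rationals $a,b,c,d$ with $X' = a^2+b^2$, $X' - 25 = c^2+d^2$, and $Y' + 50 M i = (a+bi)^2(c+di)$. Expanding gives
\[
a^2+b^2-c^2-d^2 = 25, \quad (a^2-b^2)d + 2abc = 50M, \quad Y' = (a^2-b^2)c - 2abd,
\]
and every $\Q$-solution produces a rational point on $\cal{F}_M$. Setting $M = Dk^4$, the task becomes to solve in $(a,b,c,d,k)\in\Q^5$
\[
a^2+b^2-c^2-d^2 = 25, \qquad (a^2-b^2)d + 2abc = 50 D k^4, \qquad (\ast)
\]
which defines a three-dimensional variety. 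Specialising to sub-families such as $d=0$ (reducing $(\ast)$ to $a^2+b^2 = c^2+25$ together with $abc = 25 D k^4$) or $a=b$ (forcing $c = 25Dk^4/a^2$ and $2a^2 - c^2 - d^2 = 25$) looks flexible enough to yield solutions for many $D$. Non-torsionness of the resulting point is not a real concern, since the cubic $X^3 - 25 X^2 - 2500 M^2$ has negative discriminant $-6250000\,M^2(25+27M^2)$ and hence at most one rational root, giving at most one nontrivial $\Q$-rational $2$-torsion point.

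An alternative line of attack is to compute the Mordell--Weil rank of $\cal{E}$ as an elliptic surface over $\Q(t)$. Its discriminant $-10^8 D^2 t^8(25 t^8 + 27 D^2)$ shows that $\cal{E}$ is an elliptic K3 surface with one fiber of Kodaira type $IV^*$ at $t=0$, one of type $I_8$ at $t=\infty$, and eight fibers of type $I_1$ at the roots of $25 t^8 + 27 D^2$. A proof that the generic rank over $\Q(t)$ is positive would, via Silverman's specialisation theorem, force positive rank on $\cal{E}_t$ for all but finitely many $t\in\Q$. The central obstacle to either strategy is the rigidity of the fourth-power condition in $(\ast)$: although the solution variety has dimension three, it must meet the thin set $D\cdot(\Q^*)^4$ for \emph{every} $D$, which no dimension count can guarantee. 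Overcoming this will most likely require a residue analysis of $D$ modulo small integers, in the spirit of the $\pmod{48}$ conditions appearing in Theorem~\ref{thm2}, with subcases distinguished according to the prime factorisation of $D$.
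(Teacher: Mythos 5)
This statement is a conjecture, not a theorem: the paper itself explicitly declines to prove it (``Unfortunately, we are unable to show that for any pair of integers $c,f$ it is possible to find a rational number $t=t(c,f)$\dots'') and offers only numerical evidence, namely a table of working values of $t$ for $|c-f|\leq 100$ and a computer check for $|c-f|\leq 10^3$. Your proposal does not close the conjecture either, and you say as much yourself in the final sentences. The preliminary reductions you make are correct: the rescaling $X=t^4X'$, $Y=t^6Y'$ does identify $\mathcal{E}_t$ with $Y'^2=X'^3-25X'^2-2500M^2$ for $M=(c-f)/t^4$, and the Gaussian-integer ansatz $Y'+50Mi=(a+bi)^2(c+di)$ with $X'=a^2+b^2$, $X'-25=c^2+d^2$ does produce rational points from solutions of your system $(\ast)$. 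But the essential step --- showing that for \emph{every} nonzero $D=c-f$ the system $(\ast)$ meets the set where the right-hand side lies in $50D\cdot(\Q^*)^4$ --- is exactly the open problem, and no argument is supplied. A dimension count cannot do it, as you note, and the suggested ``residue analysis mod small integers'' is a hope, not a proof.

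Two further cautions. First, the alternative route via the generic rank of $\mathcal{E}$ over $\Q(t)$ is unlikely to succeed: a section defined over $\Q(t)$ would give nontorsion points for a single, uniformly described family of $t$-values, whereas the paper's table shows the working $t$ depending on $D$ in a thoroughly erratic way, which is strong evidence that the generic $\Q(t)$-rank is zero and Silverman's specialization theorem has nothing to specialize. Second, your dismissal of the torsion issue is too quick: knowing that the cubic has at most one rational root only bounds the rational $2$-torsion; the point you construct could a priori be torsion of order $3$, $4$, \dots, $12$ (Mazur), so non-torsionness would still need to be verified (e.g.\ by a height or reduction argument) even if $(\ast)$ were solved. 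In short, the attempt is a reasonable research program but contains the same genuine gap that the paper acknowledges; the conjecture remains open.
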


We firmly believe that the following conjecture is also true.
\begin{conj}\label{conj3}
Let $a,b,c,d,e,f\in\Z$ and let us consider the elliptic surface
\begin{equation*}
\cal{E}:\; H(U,V,t)=\sum_{i+j\leq 3}a_{i,\;j}U^{i}V^{j}=0
\end{equation*}
where $a_{i,j}$ are given by {\rm (\ref{ais})}. Then the set
\begin{equation*}
S=\{t\in\Q:\; \mbox{curve}\; \cal{E}_{t}\;\mbox{is an elliptic curve
and has a positive rank}\}
\end{equation*}
is nonempty.
\end{conj}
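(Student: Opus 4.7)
The strategy is to produce a non-torsion $\Q(t)$-rational section on the elliptic surface $\cal{E}\to\bbb{P}^1_t$ and then invoke the specialization theorem for elliptic surfaces to obtain positive rank of the specialization $\cal{E}_{t_0}$ for all but finitely many $t_0\in\Q$; this would in fact prove considerably more than the conjecture, namely that $S$ is cofinite in the set of $t\in\Q$ for which $\cal{E}_t$ is smooth.

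Note that $H(U,0,t)=5t(U-1)(U^2+1)$, so the only $\Q(t)$-rational point on the line $V=0$ is $(U,V)=(1,0)$, which we take as the identity of $\cal{E}$. The next step is to find a second $\Q(t)$-rational point $P(t)$. A natural attempt is to intersect $H(U,V,t)=0$ with a line $V=\lambda(t)U+\mu(t)$ through $(1,0)$: this line meets the cubic in a third point with coordinates in a (possibly trivial) quadratic extension of $\Q(t)$, and forcing this extension to split imposes that a certain discriminant, a polynomial in $\lambda,\mu,t$ and the parameters $a,\ldots,f$, be a square in $\Q(t)$. One hopes to arrange a universal choice of $\lambda(t),\mu(t)$ valid for all $(a,\ldots,f)$; alternatively one can apply the tangent-line process at the identity to double the base point and obtain a non-identity $\Q(t)$-section.

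Given such a section $P(t)$, its non-triviality in $\cal{E}(\Q(t))$ would be verified by specializing at a convenient rational $t_0$ and computing the canonical N\'eron--Tate height of $P(t_0)$ on the specialized elliptic curve; positivity of the height (or, pragmatically, checking that $n\cdot P(t_0)\neq O$ for every $n$ below Mazur's torsion bound) implies $P(t_0)$ is non-torsion. Since reduction at a place of good reduction is injective on torsion, $P$ itself must then be non-torsion over $\Q(t)$, and the specialization theorem yields $\op{rank}\cal{E}_{t_0}(\Q)\geq 1$ for all but finitely many $t_0\in\Q$, proving the conjecture.

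The main obstacle is the construction of the section $P(t)$ uniformly in the six parameters. Over $\Q(i,t)$ the extra points $(i,0)$ and $(-i,0)$ on the line $V=0$ already furnish a ready-made $\Q(i,t)$-section (this is essentially the mechanism underlying Theorem \ref{thm2}), but descending it to $\Q(t)$ for arbitrary $(a,\ldots,f)$ appears to require genuine arithmetic input rather than formal algebra. The erratic dependence of the numerically found $t$ on $|c-f|$ in the author's table further suggests that no single closed-form expression $t(a,\ldots,f)$ exists, so a full proof may ultimately require refined rank-jumping techniques for elliptic surfaces (in the spirit of N\'eron, Silverman, or Salgado) in place of an outright construction of a $\Q(t)$-section.
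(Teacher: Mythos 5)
This statement is labelled a conjecture in the paper, and the author offers no proof of it: the only supporting evidence given is a numerical table of working values of $t$ for the special case $F(x)=x^5+cx$, $G(x)=x^5+fx$ with $|c-f|\le 10^3$, together with the explicit admission that even in that special case the author is ``unable to show'' the existence of a suitable $t$. So there is no proof in the paper to compare yours against, and your proposal should be judged on its own terms.

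On those terms, your strategy (produce a non-torsion section in $\cal{E}(\Q(t))$ and apply the specialization theorem) is the natural one, and you are right that it would prove far more than is asked --- the conjecture only needs $S\neq\emptyset$, i.e.\ one good $t_0$ per parameter tuple. But the proposal has a genuine gap at exactly the point you flag: no $\Q(t)$-section is actually constructed, and the candidate mechanisms you list do not produce one. The tangent-line process at the chosen base point $(1,0)$ cannot yield a new independent point, since that point is being used as the identity of the group law. The Galois-descent idea fails formally rather than for lack of cleverness: the three points $(1,0)$, $(i,0)$, $(-i,0)$ you exhibit are the full intersection of the cubic $H=0$ with the line $V=0$, hence are collinear, so the trace $(i,0)+(-i,0)$ is determined by the identity section and carries no new rational information. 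The discriminant condition for a line $V=\lambda(t)U+\mu(t)$ to meet the cubic in rational points is a genuine square condition in $\Q(t)$ depending on all six parameters, and nothing in the proposal shows it can be satisfied uniformly. In short, you have correctly identified the standard framework and correctly diagnosed why it does not close; what remains missing is precisely the content of the conjecture, which the paper also leaves open.
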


\bigskip

Second generalization which came to mind is considering the
following hypersurface
\begin{equation*}
\cal{V}^{f}:f(p,q)=f(r,s),
\end{equation*}
where $f$ is a symmetric quintic polynomial (i.e. $f(x,y)=f(y,x)$),
so a polynomial of the form
\begin{equation}\label{poly}
f(x,y)=\sum_{i=1}^{5}a_{i}(x^{i}+y^{i})+xy\sum_{i=1}^{3}b_{i}(x^i+y^i)+x^2y^2(c_{0}(x+y)+c_{1}).
\end{equation}

As we will see it is quite easy task to show that on the
$\cal{V}^{f}$ there is in general infinitely many $\Q(i)$-rational
points. In order to prove this we use the substitution given by
(\ref{sub}) and get a quadratic curve, say $C$, defined over $\Q(t)$
with $\Q(i)$-rational points and thus $\Q(i)$-rational curve.

Using now substitution given by (\ref{sub}) we get
\begin{equation*}
f(p,q)-f(r,s)=-\frac{(U-1)(tV-U-1)}{V^4}G(U,V,t),
\end{equation*}
where $G(U,V,t)=\sum_{i+j\leq 2}b_{i,j}u^iv^j$ and
\begin{equation*}
\begin{array}{lll}
  b_{2,0}=2a_{4}-2b_{2}+c_{1}+t(5a_{5}-3b_{3}+c_{0}), & b_{1,0}=0, & b_{0,0}=b_{2,0}, \\
  b_{0,2}=2a_{2}+t(3a_{3}-b_{1})+t^2(4a_{4}-b_{2})+t^3(5a_{5}-b_{3}), &  b_{0,1}=b_{1,1}, &
  b_{1,1}=-tb_{2,0}.
\end{array}
\end{equation*}

In order to construct $\Q(i)$-rational points on $\cal{V}^{f}$ we
must consider the quadratic $C:\;G(U,V,t)=0$. Note that
$G(i,0,t)=0$, so we can use standard method to parametrize
$\Q(i)$-rational points on $C$ and in general we get two parametric
solution of the equation $G(U,V,t)=0$. This implies the existence of
two parametric solution of the equation defining the hypersurface
$\cal{V}^{f}$.

It is clear that this method cannot be used always. Indeed, if
$b_{2,0}\equiv 0\in\Z[t]$ then the equation $G(U,V,t)=0$ is reduced
to the equation $b_{0,2}=0$ which has at most three solutions in
$\Q(i)$. However, if $b_{2,0}\neq 0$ for some $t$ then the curve $C$
is nontrivial and we can apply our method in order to construct
$\Q(i)$-rational points on $\cal{V}^{f}$. This suggests the
following

\begin{ques}\label{ques4}
Let us consider the hypersurface $\cal{V}_{f}:f(p,q)=f(r,s)$ where
$f$ is of the form {\rm (\ref{poly})}. Suppose that
$2a_{4}-2b_{2}+c_{1}=5a_{5}-3b_{3}+c_{0}=0$. Is it possible to
construct  $\Q(i)$-rational points on $\cal{V}^{f}$?
\end{ques}

Although it is possible, we do not give here equations defining the
parametrization of the curve $C$ in the case when
$b_{2,0}\in\Z[t]\setminus\{0\}.$ However, in the following example
we will show how this procedure works in practice.

\begin{exam}
{\rm Consider the following polynomial
\begin{equation*}
f(x,y)=x^5+y^5-5xy(x^2+y^2)+5xy(x+y)+5(x^2+y^2)-5(x+y).
\end{equation*}

It should be noted that hypersurface $\cal{V}^{f}$ for this
polynomial was considered by Consani and Scholten in \cite{ConScho}.

In this case we have that $C:\;U^2-tUV+(1-t+t^2)V^2-tV+1=0$. Using
the $\Q(i)$-rational point $P=(i,0)$ we get parametrization of $C$
in the form
\begin{equation*}
U=\frac{-iu^2-tu+i(t^2-t+1)}{u^2-tu+t^2-t+1},\quad
V=\frac{(1+i)(t-(1+i)u)}{u^2-tu+t^2-t+1}.
\end{equation*}
Using obtained parametrization of $C$ we get two parametric solution
of the equation defining $\cal{V}^{f}$ in the form
\begin{equation*}
p=\frac{(1+i)(u^2-(2-i)tu-it^2+t-1)}{2(t-(1+i)u)},\quad
q=\frac{(1+i)(-u^2-itu+t^2-t+1)}{2(t-(1+i)u)},
\end{equation*}
\begin{equation*}
r=\frac{(1+i)(iu^2-(2+i)tu+t^2-it+i)}{2(t-(1+i)u)},\quad
s=\frac{(1-i)(u^2-tu+t^2-t+1)}{2(t-(1+i)u)}.
\end{equation*}

}
\end{exam}

\bigskip

We should note that for specific choice of numbers
$a_{i},\;b_{j},\;c_{k}$ it is very likely that there is a rational
number $t_{0}$ with such a property that the quadric
$C_{t_{0}}:\;G(U,V,t_{0})=0$ has a rational point. Then we can use
standard method of parametrization of quadrics in order to get
rational solutions of the equation defining the hypersurface
$\cal{V}^{f}$. We illustrated this procedure with an example.

\begin{exam}{\rm Consider the polynomial $f$ given by (\ref{poly})
and suppose that $c_{1}=29$ and all remaining coefficients in the
polynomial $f$ are equal to one. Let us put $t=2$. Then on the curve
$C_{2}:\;32U^2-32UV+11V^2-32V+32=0$ we have rational point $P=(3,8)$
and parametrization of the curve $C_{2}$ is of the form
\begin{equation*}
U=\frac{160w^2-144w+33}{32w^2-32w+11},\quad
V=\frac{8(32w^2-24w+5)}{32w^2-32w+11}.
\end{equation*}
Finally, we get rational curve on the hypersurface
$\cal{V}^{f}:f(p,q)=f(r,s)$ of the form
\begin{equation*}
p=\frac{96w^2-48w+7}{8(32w^2-24w+5)},\quad
q=\frac{160w^2-144w+33}{8(32w^2-24w+5)},
\end{equation*}
\begin{equation*}
r=\frac{224w^2-160w+29}{8(32w^2-24w+5)},\quad
s=\frac{32w^2-32w+11}{8(32w^2-24w+5)}.
\end{equation*}
}
\end{exam}

\bigskip

\bigskip

 \hskip 4.5cm       Maciej Ulas

 \hskip 4.5cm       Jagiellonian University

 \hskip 4.5cm       Institute of Mathematics

 \hskip 4.5cm       Reymonta 4

 \hskip 4.5cm       30 - 059 Krak\'{o}w, Poland

 \hskip 4.5cm      e-mail:\;{\tt Maciej.Ulas@im.uj.edu.pl}


\begin{thebibliography}{100}
\bibitem{Brow}
T. D. Browning, {\it Equal sums of like polynomials}, Bull. London
Math. Soc. {\bf 37} (2005), 801-808.

\bibitem{ConScho}
C. Consani, J. Scholten, {\it Geometry and arithmetic on a quintics
threefold},  Int. Journal of Math., {\bf 12}, no. {\bf 8} (2001),
943-972.

\bibitem{Jon}
B. Jones, {\it Arithmetic theory of quadratic forms}, The
Mathematical Association of America, Baltimore, 1950.

\bibitem{Mor}
L. J. Mordell, {\it Diophantine equations}, Academic Press, London,
1969.

\bibitem{Sil}
 J. Silverman, {\it The Arithmetic of Elliptic
Curves}, Springer-Verlag, New York, 1986.

\bibitem{Whi}
R. F. Whitehead, {\it A rational parametric solution of the
indeterminate cubic equation $z^2=f(x,y)$}, J. London Math. Soc.
{\bf 40} (1944), 68-71.


\end{thebibliography}
 \end{document}